\documentclass[a4paper,11pt]{amsart}

\usepackage{amssymb}
\usepackage{fullpage}

\theoremstyle{definition}
\newtheorem{definition}{Definition}
\theoremstyle{plain}
\newtheorem{theorem}[definition]{Theorem}
\newtheorem{proposition}[definition]{Proposition}

\theoremstyle{remark}
\newtheorem{remark}[definition]{Remark}

\DeclareMathOperator{\edim}{edim}
\DeclareMathOperator{\vdim}{vdim}

\def\field{\mathbb{K}}
\def\N{\mathbb{N}}
\def\Z{\mathbb{Z}}
\def\PP{\mathbb{P}}
\def\sys{\mathcal{L}}
\def\rdf{:=}
\def\dff{\it}

\let\to\longrightarrow

\begin{document}

\title{Linear systems in $\mathbb P^3$ with low degrees and low multiplicities}

\author{Marcin Dumnicki}

\dedicatory{
Institute of Mathematics, Jagiellonian University, \\
ul. Grota-Roweckiego 26, 30-348 Krak\'ow, Poland \\
Email address: Marcin.Dumnicki@im.uj.edu.pl \\}

\subjclass{14J17; 14J70}

\begin{abstract}
We prove that the linear system of hypersurfaces in $\mathbb P^3$ of degree $d$,
$14 \leq d \leq 40$, with double, triple and quadruple points in general position
are non-special. This solves the cases that have not been completed
in a paper by E. Ballico and M.C. Brambilla.\\
Keywords: linear systems, fat points.
\end{abstract}

\maketitle

\section{Introduction}

In what follows we assume that the ground field $\field$ is of characteristic zero.
Let $d \in \Z$, let $m_1,\dots,m_r \in \N$.
By $\sys_3(d;m_1,\dots,m_r)$ we denote the linear system of hypersurfaces
(in $\PP^3 \rdf \PP^3(\field)$)
of degree $d$ passing through $r$ points $p_1,\dots,p_r$ in general position with
multiplicities at least $m_1,\dots,m_r$
(the point with multiplicity $m$ will often be called an $m$-point). The dimension of such system is denoted by
$\dim \sys_3(d;m_1,\dots,m_r)$.
Define the {\dff virtual dimension of $L = \sys_3(d;m_1,\dots,m_r)$}
\begin{align*}
\vdim L & \rdf \binom{d+3}{3} - \sum_{j=1}^{r} \binom{m_j+2}{3} - 1 \\
\intertext{and the {\dff expected dimension of $L$}}
\edim L & \rdf \max \{ \vdim L, -1 \}.
\end{align*}
Observe that $\dim L \geq \edim L$. If this inequality is strict then
$L$ is called {\dff special}, {\dff non-special} otherwise.

We will use the following notation: $m^{\times k}$ denotes the
sequence of $m$'s taken $k$ times,
$$m^{\times k} = \underbrace{(m,\dots,m)}_{k}.$$

\section{Results for multiplicities bounded by $4$}

In \cite{BB} then following (see Thm. 1) was proven:

\begin{theorem}
System $L=\sys_3(d;4^{\times x},3^{\times y},2^{\times z})$
is non-special for non-negative integers $d$, $x$, $y$, $z$
satisfying $d \geq 41$.
\end{theorem}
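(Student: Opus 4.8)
We sketch a possible proof; the route is close to that of \cite{BB}.

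\textbf{Plan and reductions.} The plan is to argue by a degeneration in the style of the Horace method (Hirschowitz), organised as a double induction on $d$ and on the number of base points. As a preliminary step, note that adjoining a general point of multiplicity $m\le 4$ to a system, whenever the enlarged system still has virtual dimension $\ge -1$, lowers both the dimension and the virtual dimension by $\binom{m+2}{3}$; hence non-speciality of the enlarged system forces non-speciality of the original. Adding double points one at a time and, symmetrically, deleting base points when $\vdim$ is very negative, one reduces to proving the theorem for those $(d,x,y,z)$ with $\vdim L$ in a fixed bounded range (the ``critical'' shapes); every other system of the family is obtained from a critical one by adding or removing general points.

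\textbf{Degeneration onto a plane.} Given a critical $L=\sys_3(d;4^{\times x},3^{\times y},2^{\times z})$, fix a general hyperplane $H\cong\PP^2\subset\PP^3$, specialise $k$ of the base points so that they lie on $H$, and let $Z\subset\PP^3$ be the resulting scheme of fat points. From the restriction sequence
$$0\longrightarrow \mathcal{I}_{\mathrm{Res}_H Z}(d-1)\longrightarrow \mathcal{I}_Z(d)\longrightarrow \mathcal{I}_{\mathrm{Tr}_H Z,\,H}(d)\longrightarrow 0$$
and its cohomology one sees that $L$ is non-special as soon as the residual system in $\PP^3$ of degree $d-1$ and the trace system in $\PP^2$ of degree $d$ are both non-special and their virtual dimensions satisfy $\vdim L=\vdim\mathrm{Res}+\vdim\mathrm{Tr}+1$. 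Specialising an $m$-point onto $H$ turns it, in the residual, into an $(m-1)$-point of $\PP^3$ and, in the trace, into an $m$-point of $H$; since $\binom{m+2}{3}=\binom{m+1}{3}+\binom{m+1}{2}$ and $\binom{d+3}{3}=\binom{d+2}{3}+\binom{d+2}{2}$, this additivity of virtual dimensions holds identically, the freedom being in the number $k$ and in which points one degenerates. When no plain specialisation makes the two dimension counts match exactly, one falls back on the differential Horace method, replacing some fat points on $H$ by suitably oriented jets so that the trace system sheds precisely the required number of conditions.

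\textbf{Closing the induction.} The trace systems are linear systems of plane curves of multiplicity $\le 4$ and degree $d\ge 41$, hence lie well above the sporadic (low-degree) special plane systems and are non-special by the known results on the postulation of fat points in the plane (including the Alexander--Hirschowitz theorem for double points). The residual systems have the shape $\sys_3(d-1;4^{\times x'},3^{\times y'},2^{\times z'},1^{\times w'})$: multiplicities are still $\le 4$, but simple points have appeared, and these are harmless because adjoining a general simple point to any linear system either lowers its dimension by exactly one or leaves it empty, so $\sys_3(d-1;4^{\times x'},3^{\times y'},2^{\times z'},1^{\times w'})$ is non-special if and only if $\sys_3(d-1;4^{\times x'},3^{\times y'},2^{\times z'})$ is; the latter is covered by the induction on $d$, which bottoms out at finitely many small values of $d$.

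\textbf{Main obstacle.} The hard part will be the interaction of the two middle steps: for \emph{every} critical $(d,x,y,z)$ one must exhibit a specialisation of the base points onto $H$ --- refined, if necessary, by a differential choice --- that simultaneously keeps the residual system inside the inductively controlled family and of expected dimension, while keeping the trace system non-special and matching the dimension count exactly. The difficulty is that the ``trace weights'' $\binom{m+1}{2}\in\{3,6,10\}$ need not divide $\binom{d+2}{2}$, and the available numbers of points of each multiplicity are limited, so one is forced to juggle mixed multiplicities on $H$ together with differential refinements. Producing a single uniform recipe, or a short finite list of recipes, covering all critical cases is where the real work lies; and the failure of such a scheme for a handful of low-degree triples is exactly the reason those cases must be settled separately by other, more combinatorial, means.
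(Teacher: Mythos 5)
First, a point of attribution: the statement you are proving is not proved in this paper at all. It is Theorem 1 of Ballico--Brambilla \cite{BB}, quoted here as input; the present paper only handles the complementary range $14 \le d \le 40$ by a computational method (gluing plus rank computations of interpolation matrices). So there is no in-paper proof to compare against, and your sketch should be judged as a reconstruction of the argument in \cite{BB}. As such, your overall strategy --- reduction to critical systems by adding/removing points, Castelnuovo restriction to a hyperplane, differential Horace to fix the arithmetic, induction on $d$ --- is indeed the right family of techniques, and your preliminary reductions and the additivity $\vdim L = \vdim \mathrm{Res} + \vdim \mathrm{Tr} + 1$ are correctly stated.

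However, what you have written is a plan, not a proof, and the gap is exactly where you locate it yourself. Nothing in the sketch uses, or explains, the threshold $d \ge 41$: you never verify that for every critical $(d,x,y,z)$ with $d\ge 41$ a specialisation exists whose trace is a non-special plane system and whose residual stays in the inductively controlled family, nor do you identify the base of the induction (which, in this set-up, is precisely the finite computational verification for small $d$ that \cite{BB} and the present paper carry out --- so the ``other, more combinatorial, means'' you defer to are not separable from your induction, they are its foundation). Two smaller points also need care: (i) the claim that the trace systems are ``well above the sporadic special plane systems'' requires a quantitative check --- the trace can be overloaded (more conditions than $\binom{d+2}{2}$), in which case you need $h^0=0$ rather than $h^1=0$, and the two regimes must be matched between trace and residual for the exact sequence to conclude anything; (ii) when the plain specialisation does not balance and you invoke the differential Horace lemma, the residual acquires not just simple points but virtual/jet schemes, and asserting that these are ``harmless'' or that the resulting system is still of the shape $\sys_3(d-1;4^{\times x'},3^{\times y'},2^{\times z'},1^{\times w'})$ is exactly the delicate bookkeeping that occupies most of \cite{BB}. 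Until those recipes are exhibited for all critical shapes and the numerical origin of $41$ is made explicit, the argument is not complete.
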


To consider all the lower cases, i.e. $d \leq 40$, authors of \cite{BB}
proposed the algorithm, which essentially is the following:

\medskip\noindent{\bf Algorithm A.}
{\small
\begin{verbatim}
 ----------------------------------------
 d=   -- degree should be chosen
 N=binomial(d+3,3);
 for z from 0 to ceiling(N/4) do
  for y from 0 to ceiling(N/10) do
   for x from 0 to ceiling(N/20) do (
    if ((20*x+10*y+4*z>N-4)and(20*x+10*y+4*z<N+20)) then (
     -- computation of the rank of the interpolation matrix
     -- the rank is maximal if and only if the system is non-special
    )
   )
 ----------------------------------------
\end{verbatim}
}

\medskip

With aid of the program Macaulay2 the following was proven
(\cite{BB}, Thm. 14):

\begin{theorem}
\label{od9do13}
System $L=\sys_3(d;4^{\times x},3^{\times y},2^{\times z})$
is non-special for non-negative integers $d$, $x$, $y$, $z$
satisfying $9 \leq d \leq 13$.
\end{theorem}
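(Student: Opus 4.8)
The plan is to run Algorithm A for each of the five degrees $d=9,10,11,12,13$ — that is, a finite computer-assisted check — after first showing that the triples it inspects form a complete set of representatives. Fix $d$ and put $N\rdf\binom{d+3}{3}$. Imposing an $m$-point contributes exactly $\binom{m+2}{3}$ linear conditions (namely $20,10,4$ for $m=4,3,2$), so the interpolation matrix $A$ of $L$ has $N$ columns and $M\rdf 20x+10y+4z$ rows, and $\dim L=N-1-\operatorname{rank}A$; consequently $L$ is non-special if and only if $\operatorname{rank}A=\min\{M,N\}$. Two monotonicity principles follow at once. (i) If $L$ is empty, i.e.\ $\operatorname{rank}A=N$, then adjoining further points only adds rows and keeps the rank at $N$, so every such larger system is empty and non-special. (ii) If $L$ is non-special and non-empty, so that $M\le N$ and the $M$ rows of $A$ are linearly independent, then a system obtained from $L$ by deleting points has an interpolation matrix whose rows are a subfamily of these, hence again independent, so it too is non-special. (All of this is at general points, using that a subset of a general configuration is general.)

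Now apply (i) to move from a triple of very negative virtual dimension to a lower-multiplicity triple by deleting $4$-, $3$- and $2$-points, and apply (ii) to move from a triple of large positive virtual dimension to a higher-multiplicity triple by adjoining $3$- and $2$-points. Since the step sizes are $4,10,20$, a short numerical-semigroup argument shows that the target can always be chosen so that $N-4<20x+10y+4z<N+20$ — this is exactly the window in Algorithm A — and moreover, when applying (ii), so that the target has $\vdim\ge -1$ (its matrix then has at most $N$ rows, which is what (ii) requires). That the window is wider than the largest step $20$ guarantees no triple can be "jumped over". For fixed $d$ only finitely many triples lie in the window — $x\le N/20$, $y\le N/10$, $z\le N/4$, so a few thousand triples in total across the five degrees — and it remains to certify non-speciality for each of them.

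For a given triple one picks $r=x+y+z$ points of $\PP^3$ with explicit coordinates, writes down the vanishing of all partial derivatives of order below $m_j$ at $p_j$ to get the $M\times N$ interpolation matrix, and computes its rank over a moderately large prime field $\field_p$. If that rank equals $\min\{M,N\}$, then the rank over $\field$ is also maximal, since reduction modulo $p$ can only lower the rank; and then, by semicontinuity of the rank, the interpolation matrix at $r$ \emph{general} points has maximal rank too, so $\dim L=\edim L$ and $L$ is non-special. The main obstacle here is computational scale rather than mathematics: for $d=13$ one has $N=560$, so the matrices are sizeable and there are thousands of them, which is precisely why the linear algebra is done modulo a prime; the only delicate theoretical point is to make sure the reduction to the window lands, in each case, on a triple of the virtual dimension needed for (i) or (ii) to apply. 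Carrying this out — as was done with Macaulay2 in \cite{BB} — establishes Theorem~\ref{od9do13}.
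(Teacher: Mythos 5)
Your proposal is correct and matches the approach the paper relies on: Theorem~\ref{od9do13} is quoted from \cite{BB} (Thm.~14), where it is established exactly by the finite check of Algorithm~A, and your monotonicity principles (i)--(ii), the width-of-window argument, and the rank computation modulo $p$ combined with semicontinuity are precisely the (usually unstated) justification for why checking only the triples with $N-4<20x+10y+4z<N+20$ suffices. The only caveat is the one you already acknowledge: the actual Macaulay2 computation must be carried out (or cited from \cite{BB}), since the theorem is ultimately computer-assisted.
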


In order to prove Theorem \ref{od9do13} one has to check large number of cases,
each case being computation of the rank of the interpolation matrix.
The computation
can be performed over $\mathbb Z_p$ for a small prime $p$, but still
it is time-consuming. The case $d=14$ (and all the next;
for $d=14$ we have $6816$ nearly-square
matrices of size $680$) has not been done in \cite{BB} due to
the length of computations. In this note we show how, using
glueing theorem from \cite{Dum}, one can lower the number of cases.
For example, one need to consider $261$ cases for $d=14$. For greater
$d$'s the difference is even more visible --- originally, for $d=40$,
$2294011$ cases were needed, while in our approach it suffices to deal with $22$ only.

The size of the each matrix is at most (about) $12341$, which is small
enough to be managed by a fast computer.

\section{Main result}
Using a smarter version of Algorithm A we will show the following:

\begin{theorem}
System $L=\sys_3(d;4^{\times x},3^{\times y},2^{\times z})$
is non-special for non-negative integers $d$, $x$, $y$, $z$
satisfying $14 \leq d \leq 40$.
\end{theorem}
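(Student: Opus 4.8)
The plan is to fix a degree $d$ with $14\leq d\leq 40$ and reduce the assertion for \emph{all} triples $(x,y,z)$ to a short, explicitly enumerable family of rank computations, using two elementary monotonicity reductions to narrow the range of virtual dimensions and then the glueing theorem of \cite{Dum} to eliminate almost all of the remaining configurations. For the first reduction, write $L=\sys_3(d;4^{\times x},3^{\times y},2^{\times z})$ and $v\rdf\vdim L$. If $v\geq 3$, I would adjoin general double points to the base locus one at a time; each lowers $v$ by $\binom{4}{3}=4$ and, imposing only $4$ linear conditions, lowers $\dim$ by at most $4$, so stopping the instant $v$ enters $\{-1,0,1,2\}$ gives a system $L'$ of the same shape with $\dim L\leq\dim L'+4(z'-z)$ and $\vdim L=\vdim L'+4(z'-z)$; hence non-speciality of $L'$ forces $\dim L\leq\vdim L$ and so $L$ is non-special. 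If instead $v\leq -21$, I would \emph{delete} base points from $L$ — doubles first, then triples, then quadruples — each deletion raising $v$ by $4$, $10$ or $20$ and enlarging $L$; since a deletion is made only while $v\leq -21$ the value of $v$ can never climb past $0$, and since the empty configuration has virtual dimension $\binom{d+3}{3}-1\geq 0$ the points cannot run out, so the process halts with a system $L''\supseteq L$ of the same shape and $\vdim L''\in\{-20,\dots,-1\}$; emptiness of $L''$ then forces emptiness of $L$, which is non-speciality since $\vdim L\leq -1$. This leaves only the configurations with $\vdim L\in\{-20,\dots,2\}$, i.e.\ the slab scanned by Algorithm~A.

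The heart of the argument — and the part I expect to be the main obstacle — is the use of the glueing theorem of \cite{Dum} to discard almost all of that slab without writing down a matrix. One specializes a chosen sub-collection of the general fat points to a very special position (collinear, coplanar, or ultimately a lattice configuration) and glues the resulting cluster into a combinatorially simpler subscheme; by upper semicontinuity of the dimension the degenerated system has dimension at least that of $L$, so if it has the expected value so does $L$. For this family of multiplicities the theorem turns non-speciality into an explicit numerical/combinatorial condition on $(d,x,y,z)$ — roughly, whether the small ``staircases'' of the points (of sizes $\binom{m+2}{3}$ for $m\leq 4$) can be packed to exhaust, or be absorbed by, the degree-$d$ simplex — which is decided in negligible time. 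The work is then to verify that this condition holds for \emph{every} configuration in the slab except for a bounded, easily listed exceptional set, essentially a few small values of one of $x,y,z$; this is what makes the number of cases drop from $6816$ to $261$ when $d=14$ and from $2294011$ to $22$ when $d=40$. (Should the glueing proceed by an induction on $d$, its base $d\leq 13$ is supplied by Theorem~\ref{od9do13} together with the lower-degree cases of \cite{BB}.)

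For the finitely many configurations surviving the glueing step, I would form the interpolation matrix — rows the conditions imposed by the simple, double, triple and quadruple points, columns the degree-$d$ monomials, hence of size at most about $\binom{43}{3}=12341$ — and verify that its rank is maximal. For a configuration of points with integer coordinates generic enough that no accidental degeneracy occurs, this gives $\dim L=\edim L$ by upper semicontinuity of the dimension; and, as in \cite{BB}, the rank computation can be carried out over $\Z_p$ for a small prime $p$, since maximality of the rank modulo $p$ forces maximality of the rank over $\field$. The matrices are nearly square and small enough to be handled by a fast computer, so once the glueing step is set up this finishes the proof.

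The delicate point is thus entirely the second step: one must confirm, uniformly over all $d\in\{14,\dots,40\}$, that the hypotheses of the glueing theorem genuinely cover the whole slab of virtual dimensions apart from the claimed short list, and that every degeneration invoked is admissible — that is, that the auxiliary trace and residual systems it relies on are themselves non-special. Once that bookkeeping is in place, everything else is either elementary (the monotonicity reductions) or routine computation (the residual ranks), and the theorem follows.
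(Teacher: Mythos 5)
Your overall architecture matches the paper's: restrict to the slab $\vdim L\in\{-20,\dots,2\}$ (your two monotonicity reductions are exactly why Algorithm A only scans $N-4<20x+10y+4z<N+20$, and they are correct as written), then use the glueing theorem of \cite{Dum} to cut down the number of configurations, then finish with finitely many rank computations over $\Z_p$. The gap is that you leave the glueing step --- which you yourself identify as the heart of the matter --- as a black box, and the mechanism you guess at is not the one that works. The glueing theorem does not convert non-speciality into a purely combinatorial ``staircase-packing'' criterion decided in negligible time; what it does is replace a cluster $\ell_1^{\times s_1},\ell_2^{\times s_2}$ of general fat points by a single $(k+1)$-point, provided the auxiliary system $\sys_3(k;\ell_1^{\times s_1},\ell_2^{\times s_2})$ is non-special and a virtual-dimension inequality holds. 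The whole proof therefore hinges on exhibiting specific admissible glueings and verifying their hypotheses, which you do not do. The paper's choices are $2^{\times 5}\to 4$ and $4^{\times a},3^{\times b}\to 10$ for $2a+b=22$, justified by checking that $\sys_3(3;2^{\times 5})$ and $\sys_3(9;4^{\times a},3^{\times b})$ are non-special of virtual dimension exactly $-1$ (the latter by Theorem \ref{od9do13}); the value $-1$ is precisely what makes the glueing preserve $\vdim$, so that the hypothesis $-1\le\vdim L_2\le\vdim L_1$ or $\vdim L_1\le\vdim L_2\le -1$ holds automatically. Iterating these glueings reduces every configuration in the slab to one of the form $\sys_3(d;10^{\times q},4^{\times x},3^{\times y},2^{\times z})$ with $z\le 4$ and $2x+y\le 21$ --- a short list, but every member of it still goes to the matrix-rank computation; there is no negligible-time numerical test that disposes of the remainder, and the count of $261$ cases for $d=14$ (or $22$ for $d=40$) is the count of matrices actually computed.

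A second concrete gap: for $14\le d\le 21$ one cannot glue greedily, because too many $10$-points in such low degree force the glued system $L_2$ to be special, and then the theorem transfers nothing back to $L_1$. The paper must cap the number of $10$-points (one for $d\in\{14,\dots,18\}$, five for $d=19$, seven for $d=20$, eight for $d=21$) and leave the remaining mass glued only into $4$-points. Without specifying, for each $d$, which glueings are performed and checking that the resulting target systems are in fact non-special, the argument does not close. Your first and third steps are sound; what is missing is exactly the data that makes the second step run.
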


We will use the following fact (see Thm. 1 and 9 in \cite{Dum}):

\begin{theorem}
Let $\sys_3(k;\ell_1^{\times s_1},\ell_2^{\times s_2})$ be non-special, let
\begin{align*}
L_1 & = \sys_3(d;m_1,\dots,m_r,\ell_1^{\times s_1},\ell_2^{\times s_2}),\\
L_2 & = \sys_3(d;m_1,\dots,m_r,k+1).
\end{align*}
If either $-1 \leq \vdim L_2 \leq \vdim L_1$ or $\vdim L_1 \leq \vdim L_2 \leq -1$
then in order to show non-specialty of $L_1$ it is enough to show non-specialty
of $L_2$.
\end{theorem}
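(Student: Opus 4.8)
I would prove the implication ``$L_2$ non-special implies $L_1$ non-special'' by a collision argument. Since $\dim L_i\ge\edim L_i$ always, only the bound $\dim L_1\le\edim L_1$ needs proof. Let $Z$ be the generic fat-point scheme carrying the multiplicities $m_1,\dots,m_r$, so that $\dim L_1+1=h^0(\PP^3,\mathcal I_{Z\cup Z'}(d))$, where $Z'$ is a generic cluster of $s_1$ points of multiplicity $\ell_1$ and $s_2$ points of multiplicity $\ell_2$; likewise $\dim L_2+1=h^0(\PP^3,\mathcal I_{Z\cup W}(d))$, where $W$ is a $(k+1)$-fold point at one further generic point $p$. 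The plan is to let the cluster $Z'$ collide onto $p$ and compare what comes out with $W$, using the non-speciality of $\sys_3(k;\ell_1^{\times s_1},\ell_2^{\times s_2})$ to control the collision.

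First I would extract what the hypotheses give for $L_2$. The $k$-jet at $p$ is a linear map $\phi\colon H^0(\mathcal I_Z(d))\to\field[u_1,u_2,u_3]_{\le k}$; its target has dimension $\binom{k+3}{3}$ and its kernel is $H^0(\mathcal I_{Z\cup W}(d))$, so $\dim L_2+1=\dim\ker\phi$. Feeding $\dim L_2=\edim L_2$ and the numerical hypothesis into the rank--nullity relation for $\phi$ pins $\phi$ down to the largest rank the hypotheses allow: in the case $-1\le\vdim L_2\le\vdim L_1$ the map $\phi$ is surjective (and, as a byproduct, $p_1,\dots,p_r$ already impose independent conditions in degree $d$), while in the case $\vdim L_1\le\vdim L_2\le-1$ one gets $\ker\phi=0$. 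This is the step converting ``$L_2$ non-special plus the $\vdim$ inequality'' into a usable surjectivity or injectivity of $\phi$.

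Next I would run the degeneration. Let the $s_1+s_2$ points of $Z'$ specialize to $p$ along infinitesimal directions in sufficiently general position, and let $Z'_0$ be the flat limit; upper semicontinuity of $h^0$ of a twisted ideal sheaf in a flat family gives $\dim L_1+1\le h^0(\mathcal I_{Z\cup Z'_0}(d))$, so it is enough to bound the right-hand side. Two properties of $Z'_0$ do the job. First, in the regime $-1\le\vdim L_2\le\vdim L_1$ one checks $\ell_i\le k+1$ for all $i$, and a balanced choice of directions makes $Z'_0$ a subscheme of $W$; then the conditions cutting out $Z'_0$ are conditions on the $k$-jet at $p$, so $H^0(\mathcal I_{Z\cup Z'_0}(d))=\phi^{-1}(U)$, where $U\subseteq\field[u_1,u_2,u_3]_{\le k}$ is the space of forms of degree $\le k$ vanishing on $Z'_0$. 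Second, with the directions chosen well $Z'_0$ imposes on $\field[u_1,u_2,u_3]_{\le k}$ exactly the expected number $\min\{\binom{k+3}{3},\,s_1\binom{\ell_1+2}{3}+s_2\binom{\ell_2+2}{3}\}$ of conditions --- precisely because $\sys_3(k;\ell_1^{\times s_1},\ell_2^{\times s_2})$ is non-special --- so that $\dim U=\edim\sys_3(k;\ell_1^{\times s_1},\ell_2^{\times s_2})+1$.

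Finally I would put the pieces together. If $\phi$ is surjective, then $\dim L_1+1\le\dim\phi^{-1}(U)=\dim U+\dim\ker\phi=\bigl(\edim\sys_3(k;\ell_1^{\times s_1},\ell_2^{\times s_2})+1\bigr)+\bigl(\edim L_2+1\bigr)$, and substituting the defining formulas for $\edim$ makes the two copies of $\binom{k+3}{3}$ cancel, leaving $\vdim L_1+1=\edim L_1+1$; hence $\dim L_1=\edim L_1$. The case $\vdim L_1\le\vdim L_2\le-1$ goes the same way but is easier: there $L_2$ is empty and $\ker\phi=0$, every relevant expected dimension is $-1$, and a good collision --- now exploiting that $\sys_3(k;\ell_1^{\times s_1},\ell_2^{\times s_2})$ is empty of the expected reason --- forces $h^0(\mathcal I_{Z\cup Z'_0}(d))=0$, so $L_1$ is empty, again $=\edim L_1$. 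In both cases $L_1$ is non-special. The main obstacle is the pair of statements about $Z'_0$ above: that a sufficiently general collision of a cluster of fat points at one point can be made to sit inside the $(k+1)$-fold point while still imposing the expected number of conditions in degree $k$, with the non-speciality of $\sys_3(k;\ell_1^{\times s_1},\ell_2^{\times s_2})$ as the only input. This is a differential Horace statement: the real work lies in choosing the collision directions (typically along a flag of linear subspaces through $p$) and in organizing the trace/residual exact sequences so that the trace problem becomes exactly the degree-$k$ auxiliary system and no speciality is introduced along the way. The remaining arithmetic is just bookkeeping with the binomial coefficients already appearing in $\vdim$.
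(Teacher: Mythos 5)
The paper itself does not prove this statement --- it is imported verbatim from Theorems 1 and 9 of \cite{Dum} --- so your argument has to stand on its own, and it does not: the whole difficulty of the theorem is concentrated in the single step you explicitly defer, namely that the cluster $Z'$ admits a specialization onto $p$ whose flat limit $Z'_0$ is contained in the $(k+1)$-fold point $W$ (respectively contains enough of $W$ in the second case) while keeping its full length. The surrounding material is fine: the rank--nullity analysis of the jet map $\phi$, the semicontinuity of $h^0$ along the flat family, and the closing arithmetic are all correct. But the collision lemma is not a ``remaining detail''; it is essentially equivalent to the theorem. Two observations show this. First, your two requirements on $Z'_0$ are not independent: once $Z'_0\subseteq W$, the surjection $\mathcal O_W\to\mathcal O_{Z'_0}$ forces $Z'_0$ to impose exactly $\mathrm{length}(Z'_0)$ independent conditions on the $k$-jet space, so in the regime $-1\le\vdim L_2\le\vdim L_1$ the non-speciality of $\sys_3(k;\ell_1^{\times s_1},\ell_2^{\times s_2})$ contributes nothing to independence --- it must be the ingredient that produces the containment $Z'_0\subseteq W$ itself, and your sketch offers no mechanism for that. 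Second, the containment genuinely fails for naive collisions: the flat limit of two general double points of $\PP^3$ colliding along a line is the scheme with ideal $(y^2,yz,z^2,x^2y,x^2z,x^4)$, of the correct length $8$ but not contained in $\mathfrak m^3$ since $x^3$ is missing. (There $\sys_3(2;2^{\times 2})$ is special, so this is not a counterexample to the lemma, but it shows the limit of a collision is governed by delicate data and not by a length count.) Arranging a collision of many heterogeneous fat points whose limit sits inside a single fat point is the ``collision de front'' problem; it is known to be hard and is not delivered by the differential Horace method, which specializes points onto a divisor rather than onto one another.

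The standard way to make this limiting argument rigorous --- and, as far as I can tell, the route taken in \cite{Dum} --- avoids identifying the limit scheme altogether. One works with the interpolation matrix: placing the cluster at the points $\varepsilon q_j$ and rescaling the column of the condition $\partial^\alpha$ at $\varepsilon q_j$ by $\varepsilon^{|\alpha|}$, the entry in the row of the monomial $x^\gamma$ becomes $\varepsilon^{|\gamma|}\,\partial^\alpha(x^\gamma)(q_j)$. A Laplace expansion of the relevant maximal minor along these columns then shows that its initial term in $\varepsilon$ is a sum over row sets supported in degrees $|\gamma|\le k$, where each summand is a product of a maximal minor of the interpolation matrix of $\sys_3(k;\ell_1^{\times s_1},\ell_2^{\times s_2})$ at the $q_j$ with a complementary minor attached to the system $L_2$; non-speciality of both systems is exactly what makes this initial term nonzero for generic $q_j$. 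To complete your write-up you would need either to carry out this $\varepsilon$-valuation argument, or to import a collision theorem of precisely the strength you assert --- and the latter is not available off the shelf in the generality required here.
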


The above Theorem allows to ``glue'' $s_1$ $\ell_1$-points and
$s_2$ $\ell_2$-points to one $(k+1)$-point
(the glueing will be denoted by $\ell_1^{\times s_1},\ell_2^{\times s_2} \to k+1$)
during computations. We will glue according to the following

\begin{proposition}
Systems $\sys_3(3;2^{\times 5})$ and $\sys_3(9;4^{\times a},3^{\times b})$
for non-negative integers $a$, $b$ satisfying $2a+b=22$ are non-special
of virtual dimension equal to $-1$.
\end{proposition}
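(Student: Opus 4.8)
The plan is to decouple the statement into two unrelated parts --- the system $\sys_3(3;2^{\times5})$ and the family $\sys_3(9;4^{\times a},3^{\times b})$ --- and, for each, to split ``non-special of virtual dimension $-1$'' into the trivial numerical check $\vdim=-1$ and the substantive claim $\dim=-1$. The numerics are immediate: $\binom{3+3}{3}=20$ and $\binom{2+2}{3}=4$ give $\vdim\sys_3(3;2^{\times5})=20-5\cdot4-1=-1$; while $\binom{9+3}{3}=220$, $\binom{4+2}{3}=20$, $\binom{3+2}{3}=10$ give $\vdim\sys_3(9;4^{\times a},3^{\times b})=219-20a-10b=219-10(2a+b)=-1$ as soon as $2a+b=22$. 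Hence in every case $\edim=-1$, so proving non-speciality amounts to proving that the system is empty.

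For the family $\sys_3(9;4^{\times a},3^{\times b})$ I would simply invoke Theorem~\ref{od9do13}: writing it as $\sys_3(9;4^{\times a},3^{\times b},2^{\times0})$ it is precisely of the form treated there with $d=9$ (and indeed $9\le9\le13$), $x=a$, $y=b$, $z=0$; the theorem asserts non-speciality for all non-negative $a,b$, in particular for the admissible pairs with $2a+b=22$, the extreme configurations $\sys_3(9;3^{\times22})$ and $\sys_3(9;4^{\times11})$ included. Combined with $\vdim=-1$ this yields $\dim=-1$.

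The only part carrying genuine content is the emptiness of $\sys_3(3;2^{\times5})$, and here the plan is an explicit coordinate computation. The dimension of a fat-point system depends only on the $\mathrm{PGL}_4$-orbit of the configuration, and any five points of $\PP^3$ in general position form a single orbit, namely that of the standard frame $e_0,e_1,e_2,e_3,e_0+e_1+e_2+e_3$; so it suffices to show there is no nonzero cubic form $F$ singular at these five points. Singularity at $e_j$ means exactly that $F$ contains none of the four monomials $x_j^2x_k$ (for $0\le k\le3$); imposing this for $j=0,1,2,3$ leaves $F=a\,x_0x_1x_2+b\,x_0x_1x_3+c\,x_0x_2x_3+e\,x_1x_2x_3$, and then vanishing of the four partials $\partial_iF$ at $(1:1:1:1)$ becomes $a+b+c=a+b+e=a+c+e=b+c+e=0$, whose only solution in characteristic zero is $a=b=c=e=0$. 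Thus $\sys_3(3;2^{\times5})=\emptyset$.

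There is no serious obstacle here. The two mild points to keep honest are: (i) that Theorem~\ref{od9do13} genuinely covers the boundary pairs $(a,b)=(0,22)$ and $(11,0)$, which it does because it imposes no positivity on the individual exponents; and (ii) the reduction of an arbitrary five general points to the standard frame --- and if one prefers to avoid even that, the alternative is to observe directly that the $20\times20$ interpolation matrix of $\sys_3(3;2^{\times5})$ has full rank, a computation small enough to do by hand.
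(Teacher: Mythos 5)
Your proposal is correct, and for two of the three claims (the $\vdim$ computations and the appeal to Theorem \ref{od9do13} for $\sys_3(9;4^{\times a},3^{\times b})$ with $2a+b=22$) it coincides exactly with the paper's proof. The only divergence is in handling $\sys_3(3;2^{\times 5})$: the paper disposes of it in one line, by ``matrix computation'' or by citing \cite{p3}, whereas you give a self-contained hand calculation --- normalizing the five general points to the standard frame via $\mathrm{PGL}_4$, observing that singularity at the four coordinate points kills all monomials except the four square-free ones $x_ix_jx_k$, and then solving the resulting $4\times 4$ linear system at $(1{:}1{:}1{:}1)$ to get only the zero cubic. That computation is correct (the system $a+b+c=a+b+e=a+c+e=b+c+e=0$ forces $a=b=c=e$ and then $3a=0$), and it buys you independence from both \cite{p3} and any machine verification, at the cost of the small extra justification that five points in linearly general position form a single $\mathrm{PGL}_4$-orbit (or, more cheaply, that emptiness at one configuration implies emptiness at the general one by semicontinuity). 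Your point (i), that Theorem \ref{od9do13} covers the boundary cases $(a,b)=(0,22)$ and $(11,0)$, is a worthwhile explicit check that the paper leaves implicit.
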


\begin{proof}
Non-specialty of $\sys_3(3;2^{\times 5})$ can be done by matrix
computation or, without it, by \cite{p3}. Non-specialty
of $\sys(9;4^{\times a},3^{\times b})$ follows from Thm. \ref{od9do13}. The computation of
$\vdim$ is straightforward:
\begin{align*}
\vdim \sys_3(9;4^{\times a},3^{\times b}) & =
\binom{13}{3}-a\binom{6}{3}-b\binom{5}{3}-1=220-10(2a+b)-1=-1,\\
\vdim \sys_3(3;2^{\times 5}) & =
\binom{6}{3} - 5 \binom{4}{3} - 1 = -1.
\end{align*}
\end{proof}

So the possible glueings are $2^{\times 5} \to 4$ and $4^{\times a},3^{\times b} \to 10$
whenever $2a+b=22$. Observe that the virtual dimension
of a system before and after glueing does not change. Hence, it is enough to consider systems
$\sys_3(d;10^{\times q},4^{\times x},3^{\times y},2^{\times z})$
with $2x+y \leq 21$ and $z \leq 4$. We summarize the above in the following
algorithm:

\medskip\noindent{\bf Algorithm B.}
{\small
\begin{verbatim}
 ----------------------------------------
 d=   -- degree should be chosen
 N=binomial(d+3,3);
 for z from 0 to 4 do
  for y from 0 to 10 do
   for x from 0 to 21 do 
(*) for q from 0 to ceilinq(N/220) do (
     if ((220*q+20*x+10*y+4*z>N-4)and(220*q+20*x+10*y+4*z<N+20)and(2*a+b<22)) then (
      -- computation of the rank of the interpolation matrix
      -- the rank is maximal if and only if the system is non-special
     )
    )
 ----------------------------------------
\end{verbatim}
}

\medskip

The above algorithm works properly for $d \geq 22$. For lower values of
$d$ the $10$-points forces systems to be special and our approach does not work.
To avoid this, we can change one line in Algorithm B
{\small
\begin{verbatim}
(*)   for q from 1 to 1 do (  -- 1 can be change into arbitrary non-negative integer
\end{verbatim}
}
\noindent
choosing the number of $10$-points to be considered. It is reasonable to
choose this number as big as possible to obtain less cases. The right choices
are one $10$-point for $d \in \{13,\dots,18\}$, five $10$-points
for $d=19$, seven for $d=20$ and eight for $d=21$.

The above algorithm has been implemented in FreePascal (but it is easy to
implement it in Singular or Macaulay2 or any other computer algebra program)
and ran on several computers simultanously. All systems appeared to
be non-special.

\begin{remark}
By Thm. 1 (``splitting theorem'') from \cite{Dum} we can avoid some of harmfull 
computations. As an
example consider $L=\sys_3(40;10^{\times 56},4^{\times 2})$. This system
appears in a list of systems to be checked for $d=40$. By 
splitting, it is enough to check non-specialty of
$\sys_3(39;10^{\times 52},4^{\times 2})$ and $\sys_3(40;40,10^{\times 4})$.
The first system is assumed to be done during the previous stage (for $d=39$),
the second is non-special by \cite{p3}. Another possibility to make
computations faster is to assume that four points are fundamental ones
($(1:0:0:0)$, $(0:1:0:0)$, $(0:0:1:0)$ and $(0:0:0:1)$). This is possible
by using a linear automorphism of $\mathbb P^3$. But then the size
of the matrix is smaller by $\sum_{j=1}^{4} \binom{m_j+2}{3}$,
so, in most cases, where at least four $10$-points are present, the
advantage is $880$ rows and columns.
For $d$ large enough we can also assume the multiplicities $m_1,\dots,m_4$
to be $14$ (resp. $15$, $18$, $20$),
since the system $\sys_3(13;4^{\times a},3^{\times b})$ for $2a+b=56$
(resp. $\sys_3(14;4^{\times a},3^{\times b})$ for $2a+b=68$,
$\sys_3(17;4^{\times a},3^{\times b})$ for $2a+b=114$,
$\sys_3(19;4^{\times a},3^{\times b})$ for $2a+b=154$)
is non-special of virtual dimension equal to $-1$.
\end{remark}


\begin{thebibliography}{9}
\bibitem{BB}
E. Ballico, M.C. Brambilla,
\textit{Postulation of general quartuple fat point schemes in $\mathbb P^3$},
arXiv:math.AG/0810.1372v1 (2008), to appear in J. Pure App. Algebra
%
\bibitem{Dum}
M. Dumnicki,
\textit{An algorithm to bound the regularity and non-emptyness of
linear systems in $\mathbb P^n$},
arXiv:math.AG/0802.0925 (2008)
%
\bibitem{p3}
C. De Volder, A. Laface,
\textit{On linear systems of $\PP^3$ through multiple points},
J. Algebra 310, 207--217 (2007)
%
\end{thebibliography}
\end{document}